\numberwithin{equation}{section}
\theoremstyle{plain}
\newtheorem{Th}{Theorem}[section]
\newtheorem{Th*}{Theorem}
\newtheorem*{Theorem*}{Theorem}
\newtheorem{Cor*}{Corollary}
\newtheorem*{Corollary*}{Corollary}
 \theoremstyle{definition}
\newtheorem{?}[Th]{Problem}
\begin{document}

\title{ Hausdorff operators on Fock Spaces}

\author{P. Galanopoulos
  \and
  G. Stylogiannis
}

\newcommand{\Addresses}{{
  \bigskip
  \footnotesize

  P.~Galanopoulos, \textsc{Department of Mathematics, Aristotle University of Thessaloniki,
    Thessaloniki, Greece 54 124}\par\nopagebreak
  \textit{E-mail address}, P.~Galanopoulos: \texttt{petrosgala@math.auth.gr}

  \medskip

  G.~Stylogiannis, \textsc{Department of Mathematics, Aristotle University of Thessaloniki,
    Thessaloniki, Greece 54 124}\par\nopagebreak
  \textit{E-mail address}, G.~Stylogiannis: \texttt{stylog@math.auth.gr}

  }}

\begin{abstract}
 Let $\mu$ be a  positive Borel measure on the positive real axis. We study  the integral operator  \\
 $$
 \mathcal{H}_{\mu}(f)(z)=\int_{0}^{\infty}\frac{1}{t}f\left(\frac{z}{t}\right)\,d\mu(t),\quad z\in \mathbb{C}\,,
 $$
 \\
 acting on the Fock spaces $F^{p}_{\alpha}$, $p\in [1,\infty],\,\alpha >0$. Its action  is easily seen to be a coefficient
multiplication  by the moment sequence
  $$
  \mu_n= \int_{1}^{\infty}\frac{1}{t^{n+1}}\,d\mu(t) .
  $$
   We  prove that
   \begin{equation*}
 ||\mathcal{H}_{\mu}||_{F^{p}_{\alpha}\to F^{p}_{\alpha}}=\sup_{n\in\mathbb{N}}\mu_n,\,\,\,\,\,1\leq p\leq \infty\,\,.
\end{equation*}
\\
A little\,-\,o \,condition  describes the compactness of $\mathcal{H}_{\mu}$ on every $F^{p}_{\alpha},\,p\in (1,\infty )$. In
addition, we completely characterize the Schatten class membership of $\mathcal{H}_{\mu}$.
\end{abstract}

\maketitle

\section{introduction}
Let  $\mu$ be a  positive Borel measure on $ (0,\infty)$ and $ H(\mathbb C)$ the space
of holomorphic functions in the complex plane. We formally consider the integral

 \begin{equation}\label{Df Hausd 2}
\mathcal{H}_{\mu}(f)(z)=\int_{0}^{\infty}\frac{1}{t}f\left(\frac{z}{t}\right)\,d\mu(t),\quad z\in \mathbb{C},
\end{equation}
where $f\in  H(\mathbb C) $. This is  the  Hausdorff operator, induced by the measure $\mu$.

Hausdorff operators have a long history. The classical discrete setting is as follows, see \cite{Har}.
 Let $ \{\mu_n\}_n $ be a sequence, and consider the lower triangular  matrix with entries
$$
c_{m,k}=\begin{cases} \binom{m}{k} \Delta^{m-k}\,\mu_{k},  \,\, k\leq m\,,\\
 0, \quad \quad \quad \quad \quad k> m\,,
\end{cases}
$$
where
\begin{equation*}
\Delta_0\,\mu_n= \mu_n-\mu_{n+1} \quad and \quad
\Delta^k \mu_n =\Delta(\Delta^{k-1}\mu_n)\,.
\end{equation*}
\\
This is the Hausdorff matrix and acts on sequences  $\{ s_n\}_n$ as follows
\\
\begin{equation}\label{HausSum}
t_m = \sum_{k=0}^m\, c_{m,k}\, s_k \,.
\end{equation}
Under the regularity condition
\\
\begin{equation}\label{regularity}
\Delta^n t_0 = \mu_n\, \Delta^n s_0\,,
\end{equation}
\\
 the transformation (\ref{HausSum}) is the generalization of  classical methods of summation such as the Ces\'aro method corresponding to
 $\mu_n=\frac {1}{n+1}$ and the  Holder method.   Cesaro summation on spaces of analytic functions was studied in
\cite{Si1}, \cite{Si2} and the Hausdorff method in \cite{GaSi},\cite{GaPa}. For Hausdorff matrices acting on sequence spaces
see for example  \cite{De},\cite{Le}, \cite{Rh} and the references therein.

An continuous version of the above is as follows, see  \cite{Har}. Let $f$ be a real variable function defined on $[0,\infty)$ such
that

$$
f(x) = \sum_{n\geq 0}\, \frac{f^{(n)}(0)}{n!}\,x^n = \sum_{n\geq 0}\, a_n \,x^n\,,\qquad \text{for x close to}\,\, 0\,
$$
and $\{\mu_n \}_n$ be a sequence. Consider a function $g$,  the image of $f$, such that
\\
$$ g^{(n)}(0) =\,\mu_n \, f^{(n)}(0). $$
\\
This is the natural continuous analogue of (\ref{regularity}). This operator $f \to g$ is represented by the series
\\
\begin{equation}\label{CoefMult}
g(x) = \sum_{n\geq 0}\, \mu_n\,a_n \,x^n \,.
\end{equation}
A particular interesting case  of $\mu_n$  is a moment sequence
$$
\mu_n = \int t^n \, \phi(t)\,dt\,,
$$
\\
which formally  induces the integral operator
$$
g(x)=\int\, f(tx)\, \phi(t)\,dt\,.
$$
A typical example is the case
$$\phi(t)=1-(1-t)^k,\,\,\,\quad t\in(0,1),$$
for $k>0$. This  corresponds to the $(C,k)$ integral means of  $f$.

In \cite{Ge}, this idea is applied to the space of Fourier-Stieltjes transform of positive Borel measures $\nu $ on $\mathbb R $,
to obtain the Hausdorff   operator
\begin{equation}\label{FST}
T(\nu)(x)= \int_{\mathbb R }\, \widehat{\nu(tx)} \,\phi(t)\,dt\,, \quad x\in \mathbb R,
\end{equation}
 where $\phi \in L^1(\mathbb R)$.  See also \cite{Gol}.

This idea was pursued further to spaces of functions in \cite{LiMo1}. We  present briefly their main result, which has been the
motive, to extend the Hausdorff operator theory to spaces of analytic functions. If in (\ref{FST}) ,
 instead of a measure, we use  an $f$ of the Hardy space of the real line $H^1(\mathbb R)$  then the function

\begin{equation}\label{HausOp}
\mathcal{H}_{\phi}(f)(x)=\int_{\mathbb R}\, \frac{1}{|t|}\,\,f\left(\frac{x}{t}\right)\,\phi(t)\,dt\,, \quad x\in \mathbb R
\end{equation}
\\
has the property
\\
$$
\widehat{\mathcal{H}_{\phi}(f)}(x)=\int_{\mathbb R}\, \widehat{f(tx)} \,\phi(t)\,dt\,, \quad x\in \mathbb R \,.
$$
\\
This is the Hausdorff transformation of $f$. An interesting viewpoint of the Hausdorff operator theory in $L^1(\mathbb R)$ is,
that  for specific choices of $\phi$, we can get the Hardy  and the Ces\' aro integral operators. On the other hand is strongly
connected to the Hilbert transform. All these are made clear through a series of papers. See \cite {LiMo1}, \cite{ChFaLi},
\cite{ChFaZh}, \cite{FaLi}, \cite{GiMo}, \cite{Ka}. For more information on the subject see  the review articles \cite{Li} and
\cite{ChFaWa} and the references there in.

Recently, in  \cite {HuKyQu2}  the authors  considered the action of
 \begin{equation}\label{Df Hausd 1}
\mathcal{H}_{\phi}(f)(z)=\int_{0}^{\infty}\,f\left(\frac{z}{t}\right)\,\frac{\phi(t)}{t}\,dt\,\,,
\end{equation}
on the Hardy spaces of the upper half plane,
where $\phi=\phi(t)$ is a locally integrable function in $(0,\infty)$.
This is the complex variable version of (\ref{HausOp}).
The question under discussion is
what conditions  should we suppose on $\phi$ in order the operator (\ref{Df Hausd 1}) to be bounded.
This question, for the Bergman spaces of the upper half plane, is answered by \cite{St}.

In this article we  study   the Hausdorff operator $\mathcal{H}_{\mu}$, as  defined   in (\ref{Df Hausd 2}), on  the Fock
spaces. Let  $1\leq p <\infty$ and $\alpha >0$. An entire function $f$ belongs  to  the Fock space $F^{p}_{\alpha}$ if
$$
||f||_{p,\alpha}^{p} = \frac{\alpha p}{2\pi}\int_{\mathbb{C}}\left|f (z)e^{-\frac{\alpha}{2} |z|^{2}}\right|^{p} dA(z) \,< \, \infty\, ,
$$
where $dA(z)$ stands for the Lebesgue area measure.
If $p=\infty$ then the space $F^{\infty}_{\alpha}$ consists of those entire functions $f$ such that
$$
||f||_{\infty,\alpha}\,=\, \sup_{z\in\mathbb{C}}\,   {|f (z)|e^{-\frac{\alpha}{2} |z|^2}  }\, < \, \infty\,.
$$
In section $2$  we deal with the boundedness. We make clear
that  the operator is well defined on Fock spaces if
\begin{equation}\label{well definition1}
\mu(0,1)=0
\end{equation}\\
and
\begin{equation}\label{well definition2}
\sup_{n \in {\mathbb{N}}}\, \int_1^{\infty}\, \frac {1}{t^{n+1}}\,d\mu(t)\,< \infty\,.
\end{equation}\\
Moreover, under these conditions, the operator can be written as
$$
\mathcal{H}_{\mu}(f)(z)= \sum_{n\geq 0}\, \int_1^{\infty}\, \frac {1}{t^{n+1}}\,d\mu(t)\,\, a_n\,\,z^n
$$
where $ f(z)=\sum_{n\geq 0} \,a_n\, z^n \,\in F^{\infty}_{\alpha}\,.$ Comparing the latter representation
with (\ref{CoefMult})
we realize that the Hausdorff operator $\mathcal{H}_{\mu}$ on the Fock spaces is
actually the operator, originally introduced  by Hardy for
 $$
 \mu_n=\int_1^{\infty}\, \frac {1}{t^{n+1}}\,d\mu(t)
 $$
 and for the complex plane instead of the real line. Notice that  (\ref{CoefMult}) expresses a  coefficient
  multiplier relation. So we actually deal with  a coefficient multiplier
 problem on Fock spaces. The   coefficient multiplier problem  on spaces of analytic functions
 is an important researcher area several questions remain open. A standard reference on the subject  is \cite{JVA}.\\

We prove the following:
\begin{Th*}
	Let $1\leq p\leq  \infty$, $\alpha>0$ and $\mu$ be a positive Borel measure on $(0,\infty)$ such that:
	(\ref{well definition1}),(\ref{well definition2}) hold. Then the  Hausdorff operator $\mathcal{H}_{\mu}$
 is bounded on $F^{p}_{\alpha}$\,.
	Moreover \\
	\begin{equation*}
	||\mathcal{H}_{\mu}||_{F^{p}_{\alpha}\to F^{p}_{\alpha}}=\sup\left\{\int_{1}^{\infty}\frac{1}{t^{n+1}}\,d\mu(t)\,: n\in\mathbb{N}\right\}
	\,,\,\,\,\,\,1\leq p\leq \infty\,\,.
	\end{equation*}
\end{Th*}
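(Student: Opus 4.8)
The plan is to prove the two inequalities separately, the lower bound being immediate and the upper bound carrying all the content. Using the coefficient--multiplier representation recorded just above the statement, one has $\mathcal{H}_{\mu}(z^{n}) = \mu_{n}z^{n}$ with $\mu_{n} = \int_{1}^{\infty}t^{-(n+1)}\,d\mu(t)$, so testing on monomials gives $\|\mathcal{H}_{\mu}(z^{n})\|_{p,\alpha} = \mu_{n}\|z^{n}\|_{p,\alpha}$, whence $\|\mathcal{H}_{\mu}\|_{F^{p}_{\alpha}\to F^{p}_{\alpha}} \ge \mu_{n}$ for every $n$, i.e. $\|\mathcal{H}_{\mu}\| \ge \sup_{n}\mu_{n}$. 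This holds for all $p\in[1,\infty]$, and it remains only to prove the matching upper bound.

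For the upper bound I would exploit the dilation structure $\mathcal{H}_{\mu}f = \int_{1}^{\infty} t^{-1}D_{t}f\,d\mu(t)$, where $D_{t}f(z)=f(z/t)$. A change of variables $w=z/t$ in the integral defining the norm yields the scaling identity $\|D_{t}f\|_{p,\alpha} = \|f\|_{p,\alpha t^{2}}$ for $1\le p<\infty$ (and the analogous sup statement for $p=\infty$). The crux is then a \emph{contractivity} statement: for $t\ge 1$, equivalently for $\beta=\alpha t^{2}\ge\alpha$, one has $\|f\|_{p,\beta}\le\|f\|_{p,\alpha}$; that is, $\beta\mapsto\|f\|_{p,\beta}$ is non-increasing, so each $D_{t}$ with $t\ge 1$ is a contraction of $F^{p}_{\alpha}$.

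Proving this monotonicity is where I expect the real work to be. Writing $g=|f|^{p}$, which is subharmonic because $f$ is entire, and passing to the circular mean $\widetilde g(r)=\frac{1}{2\pi}\int_{0}^{2\pi}g(re^{i\theta})\,d\theta$, both sides of the norm depend only on $\widetilde g$, and subharmonicity forces $\widetilde g$ to be non-decreasing in $r$. Setting $\lambda=\tfrac{\beta p}{2}$ and substituting $u=r^{2}$, one gets $\|f\|_{p,\beta}^{p} = \lambda\int_{0}^{\infty}G(u)e^{-\lambda u}\,du$ with $G(u)=\widetilde g(\sqrt u)$ non-decreasing. Differentiating in $\lambda$ and using the identity $(\lambda u-1)e^{-\lambda u}=\frac{d}{du}\big[-u e^{-\lambda u}\big]$, an integration by parts rewrites $\frac{d}{d\lambda}\|f\|_{p,\beta}^{p}$ as $-\int_{0}^{\infty}u e^{-\lambda u}\,dG(u)\le 0$, the boundary terms vanishing because membership in $F^{p}_{\alpha}$ forces enough decay of $G$. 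This gives the monotonicity, hence the contractivity of $D_{t}$.

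With the lemma in hand the assembly is routine. For $1\le p<\infty$, Minkowski's integral inequality applied to $\mathcal{H}_{\mu}f(z)e^{-\frac{\alpha}{2}|z|^{2}} = \int_{1}^{\infty}t^{-1}f(z/t)e^{-\frac{\alpha}{2}|z|^{2}}\,d\mu(t)$ gives
\[
\|\mathcal{H}_{\mu}f\|_{p,\alpha}\le\int_{1}^{\infty}t^{-1}\|D_{t}f\|_{p,\alpha}\,d\mu(t)\le\Big(\int_{1}^{\infty}t^{-1}\,d\mu(t)\Big)\|f\|_{p,\alpha}=\mu_{0}\,\|f\|_{p,\alpha}.
\]
For $p=\infty$ the same bound follows directly from the pointwise estimate $|f(z/t)|e^{-\frac{\alpha}{2}|z|^{2}}\le\|f\|_{\infty,\alpha}e^{\frac{\alpha}{2}(t^{-2}-1)|z|^{2}}\le\|f\|_{\infty,\alpha}$ valid for $t\ge 1$. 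Hence $\|\mathcal{H}_{\mu}\|\le\mu_{0}$, while the lower bound gives $\|\mathcal{H}_{\mu}\|\ge\mu_{0}$ (take $n=0$) and $\ge\sup_{n}\mu_{n}$; squeezing forces $\|\mathcal{H}_{\mu}\|=\mu_{0}=\sup_{n}\mu_{n}$, which is exactly the asserted identity. The one delicate point throughout is the contractivity of the dilations; every other step is a change of variables or a direct pointwise estimate.
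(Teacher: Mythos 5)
Your argument is correct, and the upper bound is obtained by a genuinely different route from the paper. The paper proves the two endpoint estimates separately --- the case $p=1$ by Fubini plus the monotonicity of the integral means $M(f,s)$, and the case $p=\infty$ by the pointwise growth estimate --- and then invokes the complex interpolation identity $[F^{1}_{\alpha},F^{\infty}_{\alpha}]_{\theta}=F^{p}_{\alpha}$ to cover $1<p<\infty$. You instead isolate a single lemma valid for all $p$ at once: the exact scaling identity $\|D_{t}f\|_{p,\alpha}=\|f\|_{p,\alpha t^{2}}$ (which the normalization $\tfrac{\alpha p}{2\pi}$ indeed makes exact) together with the monotonicity of $\beta\mapsto\|f\|_{p,\beta}$, proved via subharmonicity of $|f|^{p}$ and an integration by parts; Minkowski's integral inequality then gives $\|\mathcal{H}_{\mu}\|\le\mu_{0}$ directly on every $F^{p}_{\alpha}$. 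Your route is self-contained and elementary (no interpolation theory), treats all $p\in[1,\infty]$ uniformly, and the contractivity $\|D_{t}\|_{F^{p}_{\alpha}\to F^{p}_{\alpha}}=1$ for $t\ge 1$ is a reusable fact; the paper's route outsources the intermediate exponents to a known interpolation theorem, and its $p=1$ computation is essentially the $p=1$ instance of your lemma. The lower bound via the monomials $z^{n}$ is identical in both, and both arguments close the gap only because the measure lives on $[1,\infty)$, so that $\mu_{n}$ is non-increasing and $\sup_{n}\mu_{n}=\mu_{0}$. Two points in your write-up deserve a sentence of justification each: the differentiation under the integral sign in $\lambda$ (legitimate since $\int_{0}^{\infty}uG(u)e^{-\lambda u}\,du<\infty$ for $\lambda>\tfrac{\alpha p}{2}$), and the vanishing of the boundary term $G(u)ue^{-\lambda u}$ at infinity, which follows from the integrability of $G(u)e^{-\frac{\alpha p}{2}u}$ combined with the monotonicity of $G$.
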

As an application to the coefficient multiplier problem, Theorem 1 implies that
\begin{Cor*}
If $\mu$ is a positive Borel measure on  $(0,\infty)$ that  fulfils the properties
	(\ref{well definition1}),(\ref{well definition2}), then the sequence
	\begin{equation*}
	\mu_n = \int_{1}^{\infty}\frac{1}{t^{n+1}}\,d\mu(t)\, , \quad n\in \mathbb N,
	\end{equation*}
	is a coefficient multiplier for the Fock spaces $F^{p}_{\alpha}, \,p\in[1,\infty]$. In other words,
	if $ f(z)=\sum_{n\geq 0} \,a_n\, z^n \,\in F^{p}_{\alpha}\,$
	 then
	$$
	M_{\mu_n}(f)(z)= \sum_{n\geq 0}\, \mu_n\, a_n\,z^n \,\in F^{p}_{\alpha}.
	$$
	\end{Cor*}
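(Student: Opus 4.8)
The plan is to deduce the Corollary directly from the Theorem, since the two operators in question coincide. Recall from the discussion preceding the Theorem that, under hypotheses (\ref{well definition1}) and (\ref{well definition2}), the Hausdorff operator admits the power-series representation
\begin{equation*}
\mathcal{H}_{\mu}(f)(z)=\sum_{n\geq 0}\,\mu_n\,a_n\,z^n,\qquad \mu_n=\int_{1}^{\infty}\frac{1}{t^{n+1}}\,d\mu(t),
\end{equation*}
valid for every $f(z)=\sum_{n\geq 0}a_n z^n$ in $F^{\infty}_{\alpha}$. In the notation of the statement this says precisely that $\mathcal{H}_{\mu}=M_{\mu_n}$ as operators on entire functions; that is, the Hausdorff operator \emph{is} the coefficient multiplier associated with the moment sequence $\{\mu_n\}_n$.

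First I would record that the representation above is available for every $f\in F^{p}_{\alpha}$ across the whole range $1\leq p\leq\infty$. This follows from the standard nesting of Fock spaces $F^{p}_{\alpha}\subseteq F^{\infty}_{\alpha}$ for all $p$, so that any $f\in F^{p}_{\alpha}$ is in particular an element of $F^{\infty}_{\alpha}$ and the series identity applies to it. Consequently, for such $f$ we obtain the pointwise identity $M_{\mu_n}(f)=\mathcal{H}_{\mu}(f)$.

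The Theorem asserts that $\mathcal{H}_{\mu}$ is a bounded operator from $F^{p}_{\alpha}$ into itself for every $1\leq p\leq\infty$. Combining this with the identity $M_{\mu_n}(f)=\mathcal{H}_{\mu}(f)$ yields $M_{\mu_n}(f)=\mathcal{H}_{\mu}(f)\in F^{p}_{\alpha}$ whenever $f\in F^{p}_{\alpha}$, which is exactly the assertion that $\{\mu_n\}_n$ is a coefficient multiplier for $F^{p}_{\alpha}$.

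I do not expect any genuine obstacle here: the substance of the Corollary is entirely carried by the Theorem, and the only point requiring a word of justification is the identification of $M_{\mu_n}$ with $\mathcal{H}_{\mu}$ on all of $F^{p}_{\alpha}$, which the embedding $F^{p}_{\alpha}\subseteq F^{\infty}_{\alpha}$ settles. If one preferred to avoid invoking this embedding, one could instead note that the norm identity $\|\mathcal{H}_{\mu}\|_{F^{p}_{\alpha}\to F^{p}_{\alpha}}=\sup_{n}\mu_n$ already delivers $\mathcal{H}_{\mu}(f)\in F^{p}_{\alpha}$ together with the quantitative estimate $\|M_{\mu_n}(f)\|_{p,\alpha}\leq\big(\sup_{n}\mu_n\big)\,\|f\|_{p,\alpha}$, which is a strengthening of the bare multiplier conclusion.
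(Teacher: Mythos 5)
Your proposal is correct and follows essentially the same route as the paper: the Corollary is stated there as an immediate consequence of Theorem \ref{boundedness} together with the series representation $\mathcal{H}_{\mu}(f)(z)=\sum_{n\geq 0}\mu_n a_n z^n$ established for $f\in F^{\infty}_{\alpha}$ (hence for all $F^{p}_{\alpha}$ by the inclusions (\ref{inclusions})), which is exactly the identification $M_{\mu_n}=\mathcal{H}_{\mu}$ you make. Your closing remark that the norm identity yields the quantitative bound $\|M_{\mu_n}(f)\|_{p,\alpha}\leq\bigl(\sup_n\mu_n\bigr)\|f\|_{p,\alpha}$ is a correct and slightly stronger observation than the bare multiplier statement.
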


In section $3$ we consider the compactness of  $\mathcal H_{\mu}$.
Let $\mathbb X$ be a Banach  space and
$T$ a bounded operator on $\mathbb X$. We say that $T$ is compact, if and only  if the image of
the unit ball  is compact in the norm topology of the space. Firstly, we handle  the case of  $F^{2}_{\alpha}$ space.
Then, using   the  one-side compactness Theorem 9 of  \cite{CwKal}, we pass from  $F^{2}_{\alpha}$ to $F^{p}_{\alpha}$,\, $1<p<\infty$.
The outcome is:
\begin{Th*}
	Let $p\in(1,\infty)$ and $\alpha>0$. Assume that $\mu$  is a positive Borel measure on $(0,\infty)$ such that
	(\ref{well definition1}),(\ref{well definition2}) are true. Then the  Hausdorff operator $\mathcal{H}_{\mu}$ is compact   on $F^{p}_{\alpha}$  if and only if
	\\
	\begin{equation*}
	\lim_{n\to \infty}\int_{1}^{\infty}\frac{1}{t^{n+1}}\,d\mu(t)=0.
	\end{equation*}
\end{Th*}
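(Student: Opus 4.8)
The plan is to reduce everything to the Hilbert space case $p=2$, where $\mathcal{H}_{\mu}$ is a diagonal operator, and then to transfer the result to the remaining exponents: complex interpolation for the ``if'' part and a reflexivity argument for the ``only if'' part.

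First I would treat $F^{2}_{\alpha}$. The monomials are mutually orthogonal with $\|z^{n}\|_{2,\alpha}^{2}=n!/\alpha^{n}$, so the normalized monomials $e_{n}=\sqrt{\alpha^{n}/n!}\,z^{n}$ form an orthonormal basis. By the representation established in Section~2, $\mathcal{H}_{\mu}$ acts as the coefficient multiplier $a_{n}\mapsto\mu_{n}a_{n}$, hence $\mathcal{H}_{\mu}e_{n}=\mu_{n}e_{n}$; that is, $\mathcal{H}_{\mu}$ is diagonal in this basis with diagonal $\{\mu_{n}\}$. A diagonal operator on a separable Hilbert space is compact if and only if its diagonal tends to $0$: if $\mu_{n}\to0$ it is the norm limit of its finite rank truncations, while if $|\mu_{n_{k}}|\geq\delta>0$ along a subsequence then $\{\mathcal{H}_{\mu}e_{n_{k}}\}$ is $\delta\sqrt{2}$\,-\,separated and admits no convergent subsequence. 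This settles $p=2$ completely.

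For the sufficiency when $p\neq2$ I would invoke interpolation, using that the Fock spaces form a complex interpolation scale, $[F^{p_{0}}_{\alpha},F^{p_{1}}_{\alpha}]_{\theta}=F^{p}_{\alpha}$ with $1/p=(1-\theta)/p_{0}+\theta/p_{1}$. Given $p\in(1,2)$ I use the couple $(F^{2}_{\alpha},F^{1}_{\alpha})$, and given $p\in(2,\infty)$ the couple $(F^{2}_{\alpha},F^{q}_{\alpha})$ with any finite $q>p$; in each case $F^{p}_{\alpha}$ arises as an intermediate space for a suitable $\theta\in(0,1)$. By the boundedness theorem $\mathcal{H}_{\mu}$ is bounded on both endpoints, and, assuming $\mu_{n}\to0$, the previous paragraph shows it is compact on the endpoint $F^{2}_{\alpha}$. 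Theorem~9 of \cite{CwKal} on one-sided compactness then gives that $\mathcal{H}_{\mu}$ is compact on $F^{p}_{\alpha}$.

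For the necessity I would argue directly in the reflexive range $1<p<\infty$. The normalized monomials $e_{n}=z^{n}/\|z^{n}\|_{p,\alpha}$ form a bounded sequence that is weakly null: the monomials are a Schauder basis of the reflexive space $F^{p}_{\alpha}$, hence a shrinking basis, so $|g(e_{n})|\leq\|g\restriction_{\overline{\operatorname{span}}\{z^{k}:k\geq n\}}\|\to0$ for every $g\in(F^{p}_{\alpha})^{*}$. Since $\mathcal{H}_{\mu}e_{n}=\mu_{n}e_{n}$ and a compact operator carries weakly null bounded sequences to norm null ones, compactness of $\mathcal{H}_{\mu}$ forces $\|\mathcal{H}_{\mu}e_{n}\|_{p,\alpha}=|\mu_{n}|\to0$, which is precisely the asserted condition.

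I expect the main obstacle to be the interpolation step for the sufficiency: one must confirm the complex interpolation identity for the couples $(F^{2}_{\alpha},F^{1}_{\alpha})$ and $(F^{2}_{\alpha},F^{q}_{\alpha})$ and verify that the hypotheses under which Theorem~9 of \cite{CwKal} guarantees one-sided compactness are met for these concrete couples; the $p=2$ diagonalization and the weak-nullity of the normalized monomials are comparatively routine once the basis and duality facts for $F^{p}_{\alpha}$ are in hand.
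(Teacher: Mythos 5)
Your overall architecture is exactly the paper's: diagonalize on $F^{2}_{\alpha}$ and settle $p=2$ by hand, transfer the sufficiency to $p\neq 2$ via the complex interpolation identity $[F^{p_{0}}_{\alpha},F^{p_{1}}_{\alpha}]_{\theta}=F^{p}_{\alpha}$ together with the one-sided compactness theorem of Cwikel--Kalton (Theorem~9 of \cite{CwKal}, applicable because the endpoint $F^{2}_{\alpha}$ is a Hilbert, hence UMD, space), and obtain the necessity by testing $\mathcal{H}_{\mu}$ on the normalized monomials, which are eigenvectors with eigenvalues $\mu_{n}$. Your choice of the couple $(F^{2}_{\alpha},F^{q}_{\alpha})$ with finite $q>p$ for $p>2$ is an immaterial variant of the paper's choice $(F^{2}_{\alpha},F^{\infty}_{\alpha})$; both are covered by the interpolation identity quoted in Section~2.

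The one step where you genuinely diverge is the justification that $\widetilde{e}_{n}=z^{n}/\|z^{n}\|_{p,\alpha}\to 0$ weakly. You derive this from the assertion that the monomials form a Schauder basis of the reflexive space $F^{p}_{\alpha}$ (hence a shrinking basis). For Fock spaces this is a delicate and nonstandard claim: unlike the Hardy and Bergman cases, norm convergence of Taylor partial sums in $F^{p}_{\alpha}$ for $p\neq 2$ is not a routine consequence of a Riesz-projection theorem, and you give no argument for the uniform boundedness of the partial-sum projections (this is precisely the kind of question addressed in the cited paper of Tung \cite{Tu}). The conclusion you need is much cheaper and is how the paper proceeds: the sequence $\{\widetilde{e}_{n}\}$ is norm-bounded and converges to $0$ uniformly on compact subsets of $\mathbb{C}$ (the normalizing constants $\Gamma(np/2+1)^{1/p}$ grow super-exponentially), and since the reproducing kernels span a dense subspace of $(F^{p}_{\alpha})^{*}$ this already forces weak nullity (this is \cite{CoMac}[Corollary 1.3]). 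I would replace the shrinking-basis argument by this one; everything else in your proposal matches the paper's proof.
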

\bigskip
In the last  section, we   study  the Schatten classes of $\mathcal H_{\mu}$. Recall  that any compact operator on a Hilbert
space can be decomposed as
$$
T(x)=\sum_{n}\, {\lambda}_{n}\, {\langle x, e_n\rangle}_{H}\, {\sigma}_n\,,\quad x \in H
$$\\
where $\{ e_n\}$,  $\{ \sigma_{n} \}$ are orthonormal sets in $H$ and $\{\lambda_n\}$ is the sequence of  the singular
values of $T$.
 By ${\mathcal S}_p (H) $,  $p\in (0,\infty)$, we denote the Schatten $p-$class of operators acting on $H$, which
  consists of those compact operators $T$ on $H$  whose  sequence of singular numbers $\lambda_n$ belongs
to  $l^p$.
Moreover, it is true that if $\lambda_n$ are the singular numbers of an operator T, then
$$
\lambda_n=\lambda_n (T)=\inf \left\{\|T-K\| : \mbox{rank } K \leq n \right\}\,.
$$
Thus finite rank operators  belong to every ${\mathcal S}_p (H) $.   In some sense,  the membership of an
operator in ${\mathcal S}_p (H) $ measures
the size of the operator. For $1\leq p <\infty,$\, ${\mathcal S}_p (H) $ is a Banach space with the norm
$$
\|T\|^p=\sum_{n}|\lambda_n|^p\,,
$$
while for $0<p<1$ is a complete metric space. For more information on the subject, we refer  to  \cite{Zh1}.
Concluding,  the membership of  $\mathcal{H}_{\mu}$ in   $\mathcal{S}_{p}={\mathcal S}_p (F^{2}_{\alpha}) $ is described by the following:
\begin{Th*}
	Let $\mu$ be a positive Borel measure on $(0,\infty)$ such that
	(\ref{well definition1}),(\ref{well definition2}) are satisfied. Then the  Hausdorff operator $\mathcal{H}_{\mu}$ is is in Schatten class $\mathcal{S}_{p}$, $0<p<\infty$, if and only if
	\begin{equation*}
	\sum_{n=0}^{\infty}\left(\int_{1}^{\infty}\frac{1}{t^{n+1}}\,d\mu(t)\right)^{p}<\infty.
	\end{equation*}
\end{Th*}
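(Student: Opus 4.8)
The plan is to observe that on the Hilbert space $F^{2}_{\alpha}$ the operator $\mathcal{H}_{\mu}$ is \emph{diagonal} with respect to the standard orthonormal basis of normalized monomials, which reduces the Schatten membership to a plain $\ell^{p}$ summability condition on the diagonal entries.

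First I would pin down the orthonormal basis. A computation in polar coordinates gives $\|z^{n}\|_{2,\alpha}^{2}=n!/\alpha^{n}$, and since distinct monomials are orthogonal (the angular integral of $e^{i(n-m)\theta}$ vanishes for $n\neq m$) and the polynomials are dense in $F^{2}_{\alpha}$, the functions $e_{n}(z)=\sqrt{\alpha^{n}/n!}\,z^{n}$ form an orthonormal basis. By the coefficient-multiplier representation established in Section~2, one has $\mathcal{H}_{\mu}e_{n}=\mu_{n}e_{n}$ with $\mu_{n}=\int_{1}^{\infty}t^{-(n+1)}\,d\mu(t)$, so $\mathcal{H}_{\mu}$ is diagonal in this basis.

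Next I would identify the singular values. Because $\mu$ is positive and the integration runs over $t\geq 1$, each $\mu_{n}\geq 0$ and the sequence $\{\mu_{n}\}$ is non-increasing in $n$. For a positive diagonal operator the adjoint is again diagonal, $\mathcal{H}_{\mu}^{*}\mathcal{H}_{\mu}$ has diagonal entries $\mu_{n}^{2}$, and hence $(\mathcal{H}_{\mu}^{*}\mathcal{H}_{\mu})^{1/2}$ has eigenvalues $\mu_{n}$. Since the $\mu_{n}$ are already arranged in non-increasing order, the singular values are exactly $\lambda_{n}(\mathcal{H}_{\mu})=\mu_{n}$.

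Finally, by the very definition of the Schatten class $\mathcal{S}_{p}$ as the compact operators whose singular values lie in $\ell^{p}$, we obtain at once
$$
\mathcal{H}_{\mu}\in\mathcal{S}_{p}\iff\{\mu_{n}\}\in\ell^{p}\iff\sum_{n\geq 0}\Big(\int_{1}^{\infty}\frac{1}{t^{n+1}}\,d\mu(t)\Big)^{p}<\infty,
$$
which gives both implications simultaneously. I do not expect a genuine obstacle: the entire argument hinges on the diagonal structure supplied by Section~2, and the only point requiring a line of care is the verification that the singular values coincide with the $\mu_{n}$. Moreover the summability condition automatically forces $\mu_{n}\to 0$, so the resulting operator is compact, in harmony with the compactness criterion of Section~3.
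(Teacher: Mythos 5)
Your proposal is correct and follows essentially the same route as the paper: both arguments rest on the observation that $\mathcal{H}_{\mu}$ acts diagonally on the orthonormal basis $e_{n}(z)=\sqrt{\alpha^{n}/n!}\,z^{n}$ with nonnegative eigenvalues $\mu_{n}$, so that Schatten membership reduces to $\ell^{p}$-summability of the $\mu_{n}$. The only cosmetic difference is that you compute the singular values directly from $(\mathcal{H}_{\mu}^{*}\mathcal{H}_{\mu})^{1/2}$, whereas the paper invokes the standard fact that for a compact normal (here diagonal) operator, $\mathcal{S}_{p}$-membership is equivalent to $p$-summability of the eigenvalues.
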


\section{Boundedness}
In the beginning we recall the definition of the spaces and we list a few properties we will use.
Let  $ p \in (0,\infty)$ and $\alpha >0$. An $f \in H(\mathbb C)$ belongs  to  the Fock space $F^{p}_{\alpha}$
if
\begin{equation}\label{norm}
||f||_{p,\alpha}^{p} = \frac{\alpha p}{2\pi}\int_{\mathbb{C}}\left|f (z)e^{-\frac{\alpha}{2} |z|^{2}}\right|^{p} dA(z) \,< \, \infty\, ,
\end{equation}
where $dA(z)$ stands for the Lebesgue area measure.

It is true that, for each $z \in \mathbb{C}$,
\begin{equation}\label{pointwise growth}
|f (z)| \leq ||f||_{p,\alpha}\,\, e^{\frac{\alpha}{2} |z|^2} \, , \quad  \forall f\in F^p_{\alpha}\,.
\end{equation}
In other words, the point evaluation functionals are bounded on each $F^{p}_{\alpha}$.

Based on these growth estimates, it is natural to define
the space $F^{\infty}_{\alpha}$  \, that is  the space of  all the entire functions $f$ such that
$$
||f||_{\infty,\alpha} = \sup_{z\in\mathbb{C}}\,   {|f (z)|e^{-\frac{\alpha}{2} |z|^2}  }\, < \, \infty\,.
$$

It is not difficult to verify  that  the Fock spaces employed with the norm (\ref{norm}),  form a chain of  Banach spaces  with the following containment property of strict inclusions
 \begin{equation}\label{inclusions}
 F^{p}_{\alpha}  \subset  F^{q}_{\alpha} \subset F^{\infty}_{\alpha} \,,\quad 1 \leq p < q < \infty\,.
 \end{equation}
Moreover, $F^2_{\alpha}$ is a Hilbert space.\,  In this case the norm of an
   $f(z)=\sum_{n\geq 0}\, a_n z^n\, \in F^2_{\alpha}$ can be
 equivalently expressed  in terms of it's Taylor coefficients as
\begin{equation}\label{norm Hilbert}
\|f\|^2_{F^2_{\alpha}}\,=\, \sum_{n\geq 0}\, |a_n|^2\,\frac{n!}{\alpha^n}\,.
\end{equation}
 A standard reference for the theory of $F^{p}_{\alpha}$ spaces is \cite{Zh2}.

Having in mind the above and before anything else we verify the well definition of the integral transform (\ref{Df Hausd 2})  on the Fock spaces\,. Consider a $z\in \mathbb C$ and an $f\in F^{\infty}_{\alpha}$, which is the largest space of all  according to  (\ref {inclusions}).  Taking into account  (\ref{pointwise growth}), we get that
\begin{align*}
 \int^{\infty}_0 & \, \frac {1}{t}\, \left |f\left(\frac{z}{t}\right)\right|\, d\mu(t) \leq
  \int^{\infty}_0 \, \frac {1}{t} \,e^{\frac{\alpha}{2} | \frac{z}{t}|^2} \, d\mu(t) \, \|f\|_{F^{\infty}_{\alpha}}\\
  &\\
  & = \int_0^{\infty} \,  \frac {1}{t} \, \left( \sum_{n\geq 0}\, \frac{1}{n!}\, \frac{{\alpha}^n\,|z|^{2n}}{2^n\, t^{2n}}\right)\, d\mu(t) \, \|f\|_{F^{\infty}_{\alpha}} \\
  &\\
  & =\, \left[\,\sum_{n\geq 0}\, \frac{1}{n!} \, \left( \int_0^{\infty} \,  \frac {1}{t^{2n+1}}\,d\mu(t) \right) \, \frac{{\alpha}^n\,|z|^{2n}}{2^n}\,\right]\, \, \|f\|_{F^{\infty}_{\alpha}}\\
  &\\
  & \leq \, \left( \sup_{n \in {\mathbb{N}}}\, \int_0^{\infty}\, \frac {1}{t^{2n+1}}\,d\mu(t) \right) \,\, e^{\frac{\alpha}{2} |z|^2}\,\, \|f\|_{F^{\infty}_{\alpha}}\,.
\end{align*}
Therefore,  under the condition \\
\begin{equation*}
\sup_{n \in {\mathbb{N}}}\, \int_0^{\infty}\, \frac {1}{t^{2n+1}}\,d\mu(t)\,< \infty
\end{equation*}\\
the Hausdorff operator $\mathcal{H}_{\mu}$ is well defined on any Fock space $F^p_{\alpha}\,,\,\,p\in [1,\infty]$\,.
It's not difficult to check that the latter condition is equivalent to \\
$$
\sup_{n \in \mathbb {N}}\, \int_0^{\infty}\, \frac {1}{t^{n+1}}\,d\mu(t)\,< \infty\,\,\,.
$$\\
Furthermore, on the one hand we get, that
$$
\sup_{n \in \mathbb {N}}\, \int_1^{\infty}\, \frac {1}{t^{n+1}}\,d\mu(t)\,< \infty
$$
and on the other hand, applying a standard measure theoretic argument to the  condition
$$
\sup_{n \in \mathbb {N}}\, \int_0^{1}\, \frac {1}{t^{n+1}}\,d\mu(t)\,< \infty\,,
$$
results that
$$
\mu(0,1)=0 \,.
$$
Therefore,  it only make sense to  consider  positive Borel  measures $\mu$ with that property
and
$$
\sup_{n \in {\mathbb{N}}}\,\int_1^{\infty} \, \frac {1}{t^{n+1}}\,d\mu(t)\,< \infty\,.
$$
If $f\in F^p_\alpha$, then the operator is written as
$$
\mathcal{H}_{\mu}(f)(z)=\int_1^{\infty}\,\frac 1t \,f \left(\frac zt\right)\, d\mu(t) \,,\quad z\in \mathbb D\,.
$$

For the proof of  Theorem \ref{boundedness} we need the following complex interpolation argument for the  Fock spaces.
 Suppose  that $1 \leq p_0 < p_1 \leq \infty$ and $0 \leq \theta \leq 1$. Then
$$
[F^{p_{0}}_{\alpha,} ,F^{p_{1}}_{\alpha,}]_{\theta} = F^{p}_{\alpha,},
$$
where
$$\frac{1}{p}=\frac{1-\theta}{p_{0}}+\frac{\theta}{p_{1}}\,\,\,.$$
In terms of operator theory, it can be interpreted as  a boundedness property of  an operator acting on the spaces under discussion. We state it for the operator of our interest. Assume that
$$ \|\mathcal{H}_{\mu}\|_{F^{p_{0}}_{\alpha} \to F^{p_{0}}_{\alpha} } \,\leq \,  M_0 $$
and that
$$ \|\mathcal{H}_{\mu}\|_{F^{p_{1}}_{\alpha} \to F^{p_{1}}_{\alpha} } \, \leq \, M_1\,\,. $$\\
Then $ \mathcal{H}_{\mu}$ is bounded in
$$[F^{p_{0}}_{\alpha} ,F^{p_{1}}_{\alpha}]_{\theta}\,=F^{p}_{\alpha}\,\,, \quad \frac{1}{p}=\frac{1-\theta}{p_{0}}+\frac{\theta}{p_{1}} $$
with norm\\
$$ \|\mathcal{H}_{\mu}\|_{[F^{p_{0}}_{\alpha,} ,F^{p_{1}}_{\alpha,}]_{\theta}} \leq M_0^{1-\theta}M_1^{\theta}\,. $$\\
 For a complete presentation of the complex interpolation theory for the Fock spaces  see  \cite{Zh2}.

 \begin{Th}\label{boundedness}
	Let $1\leq p\leq  \infty$, $\alpha>0$ and $\mu$ be a positive Borel measure on $(0,\infty)$ such that
	(\ref{well definition1}),(\ref{well definition2}) hold. Then the  Hausdorff operator $\mathcal{H}_{\mu}$ is bounded on $F^{p}_{\alpha}$\,.
	Moreover \\
	\begin{equation*}
	||\mathcal{H}_{\mu}||_{F^{p}_{\alpha}\to F^{p}_{\alpha}}=\sup\left\{\int_{1}^{\infty}\frac{1}{t^{n+1}}\,d\mu(t)\,: n \in\mathbb{N}\right\}
	\,,\,\,\,\,\,1\leq p\leq \infty\,\,.
	\end{equation*}
\end{Th}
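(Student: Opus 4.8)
The plan is to establish the two inequalities $\|\mathcal H_\mu\|_{F^p_\alpha\to F^p_\alpha}\ge\sup_n\mu_n$ and $\|\mathcal H_\mu\|_{F^p_\alpha\to F^p_\alpha}\le\sup_n\mu_n$ separately, writing $\mu_n=\int_1^\infty t^{-(n+1)}\,d\mu(t)$. Since $t\ge1$ on the support of $\mu$, the integrand $t^{-(n+1)}$ decreases in $n$, so $\{\mu_n\}$ is non-increasing and $\sup_n\mu_n=\mu_0$; I will use this repeatedly. The lower bound is immediate from the coefficient–multiplier description: applying $\mathcal H_\mu$ to the monomial $z^n$ gives $\mu_n z^n$, whence $\|\mathcal H_\mu\|\ge\|\mathcal H_\mu z^n\|_{p,\alpha}/\|z^n\|_{p,\alpha}=\mu_n$ for every $n$, and taking the supremum yields $\|\mathcal H_\mu\|\ge\sup_n\mu_n$. (Testing on the constant function alone already gives $\ge\mu_0=\sup_n\mu_n$.)

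For the upper bound I would exploit that $\mathcal H_\mu$ is a superposition of dilations. Writing $D_tf(z)=f(z/t)$, one has $\mathcal H_\mu=\int_1^\infty t^{-1}D_t\,d\mu(t)$ and $t^{-1}D_tz^n=t^{-(n+1)}z^n$. The crux is the contraction estimate: for every $t\ge1$ and every $p\in[1,\infty]$, $\|D_t\|_{F^p_\alpha\to F^p_\alpha}\le1$. Granting this, Minkowski's integral inequality in the Banach space $F^p_\alpha$ gives $\|\mathcal H_\mu f\|_{p,\alpha}\le\int_1^\infty t^{-1}\|D_tf\|_{p,\alpha}\,d\mu(t)\le\|f\|_{p,\alpha}\int_1^\infty t^{-1}\,d\mu(t)=\mu_0\,\|f\|_{p,\alpha}$, which is the desired bound for all $p$ at once. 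Equivalently one may record only the endpoints $p=1$ (via Fubini, once $\|D_t\|_{F^1_\alpha}\le1$ is known) and $p=\infty$, and then invoke the complex–interpolation property stated above to cover $1<p<\infty$; since both endpoint norms equal $\mu_0$, interpolation returns $\mu_0^{1-\theta}\mu_0^{\theta}=\mu_0$.

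The main obstacle is the contraction estimate for $D_t$, because the naive substitution $z=tu$ produces a Jacobian factor $t^2\ge1$ together with a sharpened Gaussian weight, and one must verify these balance. The clean way is to observe that this substitution gives exactly $\|D_tf\|_{p,\alpha}^p=\|f\|_{p,\alpha t^2}^p$, the normalising constant $\tfrac{\beta p}{2\pi}$ at level $\beta=\alpha t^2$ absorbing the factor $t^2$. Thus it suffices to prove that $\beta\mapsto\|f\|_{p,\beta}$ is non-increasing on $(0,\infty)$. Differentiating under the integral, $\tfrac{d}{d\beta}\|f\|_{p,\beta}^p$ has the sign of $\int_{\mathbb C}|f(u)|^p e^{-\frac{p\beta}{2}|u|^2}\bigl(1-\tfrac{p\beta}{2}|u|^2\bigr)\,dA(u)$, and the radial weight $r\mapsto e^{-cr^2}(1-cr^2)$, with $c=\tfrac{p\beta}{2}$, integrates to zero and changes sign from $+$ to $-$ at $r=c^{-1/2}$. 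Since $|f|^p$ is subharmonic its circular average $G(r)$ is non-decreasing, so pairing $G$ against this mean-zero, centre-positive weight yields a non-positive integral; hence the derivative is $\le0$ and the norm decreases in $\beta$. For $\beta=\alpha t^2\ge\alpha$ this gives $\|D_tf\|_{p,\alpha}=\|f\|_{p,\alpha t^2}\le\|f\|_{p,\alpha}$, while the case $p=\infty$ follows even more directly from the pointwise identity $|f(z/t)|e^{-\frac\alpha2|z|^2}=|f(u)|e^{-\frac{\alpha t^2}2|u|^2}$ with $u=z/t$.

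Combining the two inequalities yields $\|\mathcal H_\mu\|_{F^p_\alpha\to F^p_\alpha}=\mu_0=\sup_n\mu_n$ for all $1\le p\le\infty$. I expect the only genuinely non-routine point to be the monotonicity of the Fock norms in the weight parameter, equivalently that inward dilation is an $F^p_\alpha$-contraction; everything else—the coefficient-multiplier lower bound, the Fubini or Minkowski passage, and the interpolation step—is routine once that lemma is in hand.
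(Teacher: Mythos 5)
Your proposal is correct, and for the upper bound it takes a genuinely different route from the paper. The paper proves the two endpoint estimates separately --- $p=1$ by a Fubini argument together with the monotonicity of the integral means $s\mapsto\int_0^{2\pi}|f(se^{i\theta})|\,d\theta$, and $p=\infty$ by the pointwise growth estimate --- and then covers $1<p<\infty$ by complex interpolation of Fock spaces. You instead isolate a single uniform contraction lemma, $\|D_t\|_{F^p_\alpha\to F^p_\alpha}\le 1$ for all $t\ge 1$ and all $p\in[1,\infty]$, and conclude by Minkowski's integral inequality; this treats every $p$ at once, makes interpolation unnecessary, and exhibits the structural reason for the bound (a superposition of contractive dilations with total mass $\int_1^\infty t^{-1}\,d\mu(t)=\mu_0=\sup_n\mu_n$, the supremum being attained at $n=0$ by monotonicity of the moments). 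The lower bound via the eigenvectors $z^n$ is identical to the paper's. Two minor remarks. First, your proof of the contraction lemma --- the identity $\|D_tf\|_{p,\alpha}=\|f\|_{p,\alpha t^2}$ followed by the derivative-and-rearrangement argument for monotonicity of $\beta\mapsto\|f\|_{p,\beta}$ --- is correct but heavier than necessary: the same subharmonicity fact the paper invokes for $p=1$ gives the lemma in one line for every finite $p$, since in polar coordinates $\|D_tf\|_{p,\alpha}^p=\frac{\alpha p}{2\pi}\int_0^\infty\bigl(\int_0^{2\pi}|f((r/t)e^{i\theta})|^p\,d\theta\bigr)e^{-\frac{\alpha p}{2}r^2}\,r\,dr$ and the inner integral is a non-decreasing function of the radius, with $r/t\le r$. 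Second, the differentiation under the integral sign and the use of Minkowski's inequality for the Borel measure $\mu$ deserve a word of justification (dominated convergence for $\beta$ in compact subintervals of $(\beta_0,\infty)$ once $\|f\|_{p,\beta_0}<\infty$, and $\sigma$-finiteness of $\mu$ on $[1,\infty)$, which follows from condition (1.9)); both are routine and neither affects the validity of the argument.
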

 \begin{proof}
 Assume that the measure $\mu$ satisfies  conditions (\ref{well definition1}),  (\ref{well definition2}).
 First we deal with case $p=1$.  Let an $f \in F^1_{\alpha}$ then
 \begin{align*}
\|\mathcal{H}_{\mu}(f)\|_{F^1_{\alpha}}&=\, \frac{a}{2\pi}\, \int_{\mathbb C} \, |\mathcal{H}_{\mu}(f)(z)|\, e^{-\frac{\alpha}{2}|z|^2}\, dA(z)\\
&\\
& =  \, \frac{a}{2\pi}\, \int_{\mathbb C} \, \left|\int_{1}^{\infty}\frac{1}{t}f\left(\frac{z}{t}\right)\,d\mu(t)\right|\, e^{-\frac{\alpha}{2}|z|^2}\, dA(z)\\
&\\
& \leq  \, \frac{a}{2\pi}\, \int_{1}^{\infty}\frac{1}{t}\,\,\int_{\mathbb C} \left | f\left(\frac{z}{t}\right)\right| \,  e^{-\frac{\alpha}{2}|z|^2}\, dA(z)\,\,d\mu(t)\\
&\\
& = \, \frac{a}{2\pi}\, \int_{1}^{\infty}\frac{1}{t}\,\,\int_0^{\infty}\,\int_{0}^{2\pi}\left | f\left(\frac{r}{t}\,e^{i\theta}\right)\right| \, d\theta e^{-\frac{\alpha}{2}r^2}\, r\, dr\,\,d\mu(t)\\
&\\
& \leq \frac{a}{2\pi}\, \int_{1}^{\infty}\frac{1}{t}\,\,\int_0^{\infty}\,\int_{0}^{2\pi}\left | f\left(r\,e^{i\theta}\right)\right| \, d\theta e^{-\frac{\alpha}{2}r^2}\,r\, dr\,\,d\mu(t)\,,
\end{align*}
since the integrals
$$
M(f,s)=\int_{0}^{2\pi} \left | f\left(s\,e^{i\theta}\right) \right |\,d\theta
$$
are increasing functions of s.\\
So
\begin{align}\label{upper norm F1}
 \|\mathcal{H}_{\mu}(f)\|_{F^1_{\alpha}}&\leq \,\, \sup_{n\in \mathbb N} \left\{\,\int_1^{\infty}\,\,\frac{1}{t^{n+1}}\,\,d\mu(t) \right\}\,\,
 \| f \|_{F^1_{\alpha}}\,.
 \end{align}

 On the other hand, the argument for the well definition of the operator provides the following estimation

 \begin{align}\label{upper norm Finfty}
||\mathcal{H}_{\mu}(f)||_{F^{\infty}_{\alpha}} \,\leq
\,\sup_{n\in \mathbb N} \left\{ \int_{1}^{\infty}\frac{1}{t^{n+1}}\, d\mu(t)\right\}\,\,\|f\|_{F^{\infty}_{\alpha}}\,.
 \end{align}
  \\
  Combining (\ref{upper norm F1}) and \,(\ref{upper norm Finfty}) with the interpolation property, we get that
  \\
   \begin{align}\label{upper bound}
   ||\mathcal{H}_{\mu}||_{{F^{p}_{\alpha}}\to F^{p}_{\alpha}} \,\leq
\,\sup_{n\in \mathbb N} \left\{ \int_{1}^{\infty}\frac{1}{t^{n+1}}\, d\mu(t)\right\}\,,\quad 1\leq p\leq \infty\,.
     \end{align}
      \\
   \par It turns out that
   \\
   \begin{align*}
   ||\mathcal{H}_{\mu}||_{{F^{p}_{\alpha}}\to F^{p}_{\alpha}}\,=\,\sup_{n\in \mathbb N} \left\{ \int_{1}^{\infty}\frac{1}{t^{n+1}}\, d\mu(t)\right\}\,,\quad 1\leq p\leq \infty\,.
    \end{align*}
    \\
   This is a consequence of  the following observation. The monomials
   \\
   \begin{equation*}
   u_n(z) \,=\, z^n\,,\quad n\in \mathbb N\,
   \end{equation*}
   \\
    serve as test functions for  $F^p_{\alpha}$, so for any $F^p_{\alpha}$, \,$p\in[1,\,\infty]$,\,and
   \\
    \begin{equation*}
   {\mathcal H}_{\mu}(u_n )(z) \,\,=\,\,\int_{1}^{\infty} \,\frac{1}{t^{n+1}}\,d\mu(t)\,\,u_n(z)\,,\quad n\in \mathbb N\,.
   \end{equation*}
   \\
   Therefore
   \begin{align*}
\|\mathcal{H}_{\mu} \|_{F^{p}_{\alpha}\to F^{p}_{\alpha}}
&\geq \,\int_{1}^{\infty}\frac{1}{t^{n+1}}\, d\mu(t)\,,
\end{align*}
for every $n \in \mathbb N$.

 \end{proof}

\section{Compactness }
In this section we deal with the problem of compactness. This will be a two step procedure. First we consider the Hilbert space case and we use  well known arguments of operator theory on Hilbert spaces. These are stated below
for the space $F^{2}_{\alpha}$.

The necessity is proved using  the  property, that  compactness can be equivalently determined by the condition
\begin{equation}\label{weak compactness}
\|T(f_n)\|_{F^{2}_{\alpha}} \to 0\,, \quad \text{whenever}\,\, f_n \,\,\to 0 \,\,\text{weakly in}\,F^{2}_{\alpha}\,.
\end{equation}
On the other hand  the sufficient condition results from a well known property according to which
if $\{T_n\}_n$ is a sequence of compact operators on $F^{2}_{\alpha}$ such that
\begin{equation}\label{finite rank}
\|T-T_n\| \,\to 0 \,, \qquad n\to \infty,
\end{equation}
then $T$ is compact on $F^{2}_{\alpha}$.

\begin{Th}\label{comp F(2,a)}
	Assume that $\mu$  is a positive Borel measure on $(0,\infty)$ such that
	(\ref{well definition1}), (\ref{well definition2}) are true. The  Hausdorff operator $\mathcal{H}_{\mu}$ is compact   on $F^{2}_{\alpha},\,\, \alpha>0,\,\,$  if and only if
	\\
	\begin{equation*}
	\lim_{n\to \infty}\int_{1}^{\infty}\frac{1}{t^{n+1}}\,d\mu(t)=0.
	\end{equation*}
\end{Th}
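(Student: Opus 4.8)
The plan is to recognize that, on the Hilbert space $F^2_\alpha$, the operator $\mathcal H_\mu$ is nothing but a diagonal operator, after which the statement collapses to the classical characterization of compactness for such operators. Indeed, by the norm identity \eqref{norm Hilbert} the normalized monomials
\[
e_n(z)=\sqrt{\frac{\alpha^n}{n!}}\,z^n,\qquad n\in\mathbb N,
\]
form an orthonormal basis of $F^2_\alpha$, and the computation already carried out in the proof of Theorem \ref{boundedness} shows that $\mathcal H_\mu(u_n)=\mu_n u_n$ with $\mu_n=\int_1^\infty t^{-(n+1)}\,d\mu(t)\ge 0$. Hence $\mathcal H_\mu e_n=\mu_n e_n$, so $\mathcal H_\mu$ is a positive diagonal operator whose eigenvalue sequence is exactly the moment sequence $\{\mu_n\}$. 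Everything below is the standard dichotomy ``a diagonal operator is compact if and only if its diagonal tends to $0$'', translated into the present setting.

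\textbf{Necessity.} I would invoke the weak-compactness criterion \eqref{weak compactness}. Since $\{e_n\}$ is an orthonormal basis, it converges weakly to $0$: for every $g\in F^2_\alpha$ the numbers $\langle e_n,g\rangle$ are the conjugates of the Fourier coefficients of $g$, hence square-summable and in particular null. If $\mathcal H_\mu$ is compact, \eqref{weak compactness} forces $\|\mathcal H_\mu e_n\|_{F^2_\alpha}\to 0$; but $\|\mathcal H_\mu e_n\|_{F^2_\alpha}=\mu_n\|e_n\|_{F^2_\alpha}=\mu_n$, and therefore $\mu_n\to 0$.

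\textbf{Sufficiency.} Conversely, assume $\mu_n\to 0$. For each $N$ introduce the truncated multiplier $T_N$, defined on $f(z)=\sum_{n\ge 0}a_nz^n$ by $T_N f(z)=\sum_{n=0}^{N}\mu_n a_n z^n$; equivalently $T_N e_n=\mu_n e_n$ for $n\le N$ and $T_N e_n=0$ for $n>N$. Each $T_N$ has finite rank, hence is compact. Since $\mathcal H_\mu-T_N$ is again diagonal, with eigenvalues $\mu_n$ for $n>N$ and $0$ otherwise, its operator norm equals the supremum of the absolute values of its eigenvalues, namely
\[
\|\mathcal H_\mu-T_N\|=\sup_{n>N}\mu_n\xrightarrow[N\to\infty]{}0,
\]
because $\mu_n\to 0$. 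Applying \eqref{finite rank} we conclude that $\mathcal H_\mu$ is compact on $F^2_\alpha$.

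There is no deep obstacle here: once the diagonalization furnished by Theorem \ref{boundedness} is in hand, the result is the textbook compactness criterion for diagonal operators. The only two points deserving a careful (but routine) justification are the identity $\|D\|=\sup_n|d_n|$ for a diagonal operator $D$ with $De_n=d_ne_n$ (used to evaluate the norm of the tail $\mathcal H_\mu-T_N$ in the sufficiency part) and the weak null convergence of the orthonormal system $\{e_n\}$ (used in the necessity part via \eqref{weak compactness}).
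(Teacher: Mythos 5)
Your proposal is correct and follows essentially the same route as the paper: both directions rest on the diagonalization $\mathcal H_\mu e_n=\mu_n e_n$, the necessity uses the weak null convergence of the orthonormal basis together with \eqref{weak compactness}, and the sufficiency approximates by the same truncated finite-rank multipliers and estimates the tail (you via the identity $\|D\|=\sup_n|d_n|$, the paper via the coefficient norm formula \eqref{norm Hilbert} --- the same computation in substance).
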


\begin{proof}
	
	Suppose that $\mathcal{H}_{\mu}$ is compact on  $F^2_{\alpha}$.
	We consider the orthonormal basis of $F^2_{\alpha}$
	\begin{equation}\label{basis}
	e_{n}(z)=\sqrt{\frac{\alpha^{n}}{n!}}\,z^{n}\,, \quad n \in \mathbb N\,.
	\end{equation}
	It is true that
	$$ e_{n}\to 0  \quad \text{weakly in} \,\,\,F^{2}_{\alpha}\,.$$
	This combined with the assumption, that  $\mathcal{H}_{\mu}$ is compact
	and the property (\ref{weak compactness}) imply that
	\\

$$
||\mathcal{H}_{\mu}({e}_{n})||_{F^{2}_{\alpha}}\to 0\,, \quad \text{as} \quad n\to \infty\,.
$$
Since
\begin{equation}\label{expansion1}
\mathcal{H}_{\mu}({e}_{n})(z)=\int_{1}^{\infty}\frac{1}{t^{n+1}}\mu(t)\,{e}_{n}(z)\,,
\end{equation}
it is immediate that
$$
\lim_{n\to \infty}\int_{1}^{\infty}\frac{1}{t^{n+1}}\,d\mu(t)=0.
$$

Conversely suppose that
$$
\lim_{n\to \infty}\int_{1}^{\infty}\frac{1}{t^{n+1}}\,d\mu(t)=0\,.
$$
\\
 For a given $\varepsilon>0$,  there is a $n_0 \in \mathbb N $  such that, for every $n>n_0$, we have
 \\
$$\int_{1}^{\infty}\frac{1}{t^{n+1}}\,d\mu(t)<\varepsilon\,.$$
\\
Consider now  the finite rank operator
\\
 $$
\mathcal{H}_{\mu,k }(f)(z)=\sum_{n=0}^{k}\,\int_{1}^{\infty}\frac{1}{t^{n+1}}\,d\mu(t)\,a_{n}\, z^{n}\,,\quad k\in \mathbb N
$$
\\
where  $f(z)=\sum a_n z^n\in F^{2}_{\alpha}$\,. If we take into account (\ref{norm Hilbert}),  we get that

\begin{align*}
||\mathcal{H}_{\mu}(f)&-\mathcal{H}_{\mu,k}(f)||_{F^{2}_{\alpha}}^{2}
=\sum_{n=k +1}^{\infty}\left(\int_{1}^{\infty}\frac{1}{t^{n+1}}\,d\mu(t)\right)^{2}|a_{n}|^{2}\,\frac{\alpha^{n}}{n!}\\
&\\
& <  \varepsilon\,\,\, \sum_{n=k+1}^{\infty} |a_{n}|^{2} \,\frac{\alpha^{n}}{n!}
< \varepsilon\,\,\, ||f||_{F^{2}_{\alpha}}^{2}\,, \quad \text{for every}\,\,\, k>n_0\,.
\end{align*}

As a consequence of the condition (\ref{finite rank}) we prove,  that  $\mathcal{H}_{\mu}$ is compact.\\
\end{proof}

Now, in order to complete the scene for the compactness,  we  have to pass from the case $p=2$ to any $p\in(1,\infty)$. As in the case of boundedness we will apply an interpolation argument for compact operators. The study of interpolation properties of compact operators is a classical area   with  applications  to other branches of analysis. The behavior of compact operators under the complex interpolation method is one of  the main open problems on the subject.
In oder to confront the problem of compactness for the spaces $F^{p}_{\alpha},\,p \neq 2,$ we will employ Theorem $9$ in \cite{CwKal} which is an interpolation technique for Banach spaces, that have the UMD
property that is the property of Unconditional Martingale Differences. For more information on the subject we suggest to the interested reader the \cite{HNVW}.
Since Hilbert spaces have this property,
we state Theorem $9$ \cite{CwKal} adapted to our case.

\begin{Theorem*}
	Let $(X_0, X_1)$ be a Banach couple and $X_0$ be a Hilbert  space. If,  in the following cases ,
	\begin{equation*}
	T : X_0+X_1 \to X_0 +X_1 \,,
	\end{equation*}
	\begin{equation*}
	T : X_j \to X_j \,, \quad\quad j=0,1
	\end{equation*}
	$T$ acts as a bounded  operator
	and
	\begin{equation*}
	T : X_0\to X_0
	\end{equation*}
	  is a compact operator then
	\begin{equation*}
	T :[X_0, X_1]_{\theta}  \to [X_0, X_1]_{\theta}
	\end{equation*}
	is compact  for any $\theta \in (0,1)$ .
\end{Theorem*}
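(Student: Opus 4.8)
The plan is to obtain this statement as a specialization of the general one-sided compactness theorem of Cwikel and Kalton, Theorem~9 of \cite{CwKal}, whose hypothesis is a UMD-type geometric condition on the Banach couple. First I would check that the hypotheses of the general theorem are met in the present situation: the requirement there is that one of the endpoint spaces carry the UMD property, and since every Hilbert space is UMD, the assumption that $X_0$ is Hilbert supplies exactly this. The remaining data --- that $T$ is bounded on the sum $X_0+X_1$, bounded on each $X_j$, and compact on $X_0$ --- are precisely the hypotheses listed. With these verified, the conclusion that $T$ is compact on every intermediate space $[X_0,X_1]_\theta$, $0<\theta<1$, is immediate from the cited theorem.

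If instead one wanted a more self-contained route, the natural attempt is an approximation argument. Because $X_0$ is a Hilbert space and $T$ is compact on $X_0$, its singular value decomposition provides finite-rank operators $T_N$ with $\|T-T_N\|_{X_0}\to 0$. One then wants to invoke the interpolation inequality
$$
\|T-T_N\|_{[X_0,X_1]_\theta}\;\le\;\|T-T_N\|_{X_0}^{\,1-\theta}\,\|T-T_N\|_{X_1}^{\,\theta},
$$
so that, provided $\|T-T_N\|_{X_1}$ stays bounded in $N$, the left-hand side tends to $0$; since each $T_N$ is finite rank and hence compact on the interpolation space, the limit $T$ would be compact there as well.

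The hard part, and the reason the plain approximation scheme does not settle the matter in general, is controlling the approximants on the second endpoint. The truncations $T_N$ are built from the $X_0$-inner product, so a priori the functionals and vectors that define them need not even be bounded on $X_1$, let alone give a uniform bound for $\|T-T_N\|_{X_1}$. Overcoming this simultaneous-control difficulty is exactly the content of the Cwikel--Kalton machinery, where the UMD property of the Hilbert endpoint is used to produce approximating operators that behave well on both ends at once; this is why I would lean on Theorem~9 of \cite{CwKal} rather than reprove it by hand.
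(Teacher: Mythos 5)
Your proposal matches the paper exactly: the authors do not reprove this statement but obtain it as a specialization of Theorem~9 of \cite{CwKal}, noting that the UMD hypothesis required there is satisfied because $X_0$ is a Hilbert space. Your additional remarks on why a naive finite-rank approximation cannot control the $X_1$ endpoint are a correct (and welcome) explanation of why one leans on the Cwikel--Kalton machinery rather than arguing by hand.
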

Applying this, we complete the characterization of compactness for  the Hausdorff operator on any $F^{p}_{\alpha}\,,\,p\in(1,\infty)$.
\begin{Th}\label{comp F(p,a)}
	Let $p\in(1,\infty)$ and $\alpha>0$. Assume that $\mu$  is a positive Borel measure on $(0,\infty)$ such that
	(\ref{well definition1}),(\ref{well definition2}) are true. Then the  Hausdorff operator $\mathcal{H}_{\mu}$ is compact   on $F^{p}_{\alpha}$  if and only if
	\\
	\begin{equation*}
	\lim_{n\to \infty}\int_{1}^{\infty}\frac{1}{t^{n+1}}\,d\mu(t)=0.
	\end{equation*}
\end{Th}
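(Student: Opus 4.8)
The plan is to reduce everything to the Hilbert-space case $p=2$ already settled in Theorem \ref{comp F(2,a)}, handling the two implications with different tools: the one-sided interpolation theorem of \cite{CwKal} for sufficiency, and a direct weak-null/eigenvector argument for necessity (the interpolation result is one-directional and cannot by itself deliver necessity).

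\textbf{Sufficiency.} Assume $\lim_{n\to\infty}\int_1^\infty t^{-(n+1)}\,d\mu(t)=0$. By Theorem \ref{comp F(2,a)}, $\mathcal H_\mu$ is compact on $F^2_\alpha$. Since $F^2_\alpha$ is a Hilbert space, I would invoke the stated Theorem $9$ of \cite{CwKal} with $X_0=F^2_\alpha$ playing the role of the distinguished Hilbert endpoint, choosing $X_1$ so that $[X_0,X_1]_\theta=F^p_\alpha$ for a suitable $\theta\in(0,1)$. For $1<p<2$ I would take the couple $(F^2_\alpha,F^1_\alpha)$, for which $\frac1p=\frac{1-\theta}{2}+\theta$ forces $\theta=\frac2p-1\in(0,1)$; for $2<p<\infty$ I would take $(F^2_\alpha,F^{p_1}_\alpha)$ with a finite exponent $p_1>p$ and the $\theta$ determined by $\frac1p=\frac{1-\theta}{2}+\frac{\theta}{p_1}$ (the endpoint $p_1=\infty$ works too via the stated interpolation identity, but a finite $p_1$ avoids the $L^\infty$ subtleties). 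In both cases the three boundedness hypotheses ($\mathcal H_\mu$ bounded on $X_0+X_1$, on $X_0$, and on $X_1$) are furnished by Theorem \ref{boundedness}, while compactness on $X_0=F^2_\alpha$ is Theorem \ref{comp F(2,a)}; the theorem then yields compactness of $\mathcal H_\mu$ on $[X_0,X_1]_\theta=F^p_\alpha$.

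\textbf{Necessity.} Suppose $\mathcal H_\mu$ is compact on $F^p_\alpha$, $1<p<\infty$. I would mirror the $p=2$ argument, replacing the orthonormal basis by the normalized monomials $e_n(z)=z^n/\|z^n\|_{p,\alpha}$. The two ingredients are that $F^p_\alpha$ is reflexive for $1<p<\infty$ (being a closed subspace of the Gaussian-weighted $L^p$-space), so that compact operators coincide with the completely continuous ones, and that $\{e_n\}$ is a bounded sequence tending to $0$ weakly, which I would check by testing against the bounded and total family of Taylor-coefficient functionals $f\mapsto a_k$ (each vanishing on $e_n$ once $n>k$) together with a subsequence-plus-reflexivity argument. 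Since $\mathcal H_\mu(e_n)=\bigl(\int_1^\infty t^{-(n+1)}\,d\mu(t)\bigr)\,e_n$, complete continuity forces $\int_1^\infty t^{-(n+1)}\,d\mu(t)=\|\mathcal H_\mu(e_n)\|_{p,\alpha}\to0$. A cleaner variant avoids reflexivity entirely: the monomials are eigenvectors of $\mathcal H_\mu$ with eigenvalues $\mu_n=\int_1^\infty t^{-(n+1)}\,d\mu(t)$, and by Riesz--Schauder theory the nonzero eigenvalues of a compact operator on any Banach space form a null sequence with finite-dimensional eigenspaces, which again gives $\mu_n\to0$ (otherwise a value bounded away from $0$ would recur along infinitely many linearly independent monomials).

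\textbf{Main obstacle.} The substantive step is sufficiency: compactness behaves delicately under complex interpolation, and it is precisely the Hilbert (hence UMD) structure of the endpoint $F^2_\alpha$ that makes the one-sided theorem of \cite{CwKal} applicable. The care required is to keep $F^2_\alpha$ as the Hilbert endpoint $X_0$ in both regimes $1<p<2$ and $2<p<\infty$ — which dictates the asymmetric choices $X_1=F^1_\alpha$ versus $X_1=F^{p_1}_\alpha$ — and to verify that the interpolation identity genuinely reproduces $F^p_\alpha$ with $\theta\in(0,1)$. The exclusion of $p=1,\infty$ is natural on both sides: $F^2_\alpha$ must sit strictly between the endpoints for the interpolation argument, and reflexivity (used in necessity) fails at $p=1$ and $p=\infty$.
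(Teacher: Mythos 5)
Your proposal is correct and follows essentially the same route as the paper: sufficiency via the one-sided Cwikel--Kalton compactness-interpolation theorem with $F^2_\alpha$ as the Hilbert endpoint (the paper takes $X_1=F^1_\alpha$ or $F^\infty_\alpha$; your finite $p_1$ is an immaterial variant), and necessity via the weakly null normalized monomials, which are eigenvectors with eigenvalues $\mu_n$. Your alternative necessity argument through Riesz--Schauder theory (only finitely many linearly independent eigenvectors with eigenvalue bounded away from zero) is a clean, reflexivity-free bonus not in the paper, but the main line of reasoning is the same.
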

\begin{proof}
The sufficiency is an application of the  interpolation argument, we have presented above.
 Choosing as $X_0 = F^{2}_{\alpha}$ and as $X_1=  F^{p_0}_{\alpha}$,  for  $p_0=1$ or $p_0= \infty$,
 recalling that
$$
[F^{2}_{\alpha,} ,F^{p_0}_{\alpha,}]_{\theta} = F^{p}_{\alpha,},
$$
where
$$\frac{1}{p}=\frac{1-\theta}{2}+\frac{\theta}{p_{0}}\,\,\,,$$
and assuming the condition
\begin{equation*}
	\lim_{n\to \infty}\int_{1}^{\infty}\frac{1}{t^{n+1}}\,d\mu(t)=0
	\end{equation*}
we ensure all the prerequisites of the latter theorem so to conclude that
\begin{equation*}
	T :F^{p}_{\alpha}  \to F^{p}_{\alpha} \quad p\in (1,\infty)\,.
	\end{equation*}
is compact.

For the converse, suppose that $\mathcal{H}_{\mu}$ is compact on  $F^p_{\alpha}$.
We consider the normalized functions
\begin{equation}\label{monomials normalazed}
  \widetilde{e}_{n}(z)=\frac{(\alpha p)^{n/2}}{\Gamma(\frac{np}{2}+1)^{1/p}\, 2^{n/2}}\,z^{n}\,, \quad  n\in \mathbb N\,.
  \end{equation}
Then it is easy to see (\cite{CoMac}[Corollary 1.3]) that
$$ \widetilde{e}_{n}\to 0  \quad \text{weakly in} \,\,\,F^{p}_{\alpha}\,$$
and thus as in Theorem \ref{comp F(2,a)} turns out that
$$
\lim_{n\to \infty}\int_{1}^{\infty}\frac{1}{t^{n+1}}\,d\mu(t)=0\,.
$$
\end{proof}

\section{Schatten Classes}

The objective of this last part  is the study of the membership   of $\mathcal{H}_{\mu}$ in the Schatten Classes  $\mathcal{S}_{p}(F^2_{\alpha})$ .
 We begin with an  $\mathcal{H}_{\mu}$   compact  on $F^2_{\alpha}$, that is
  $$
\lim_{n\to \infty}\int_{1}^{\infty}\frac{1}{t^{n+1}}\,d\mu(t)=0\,.
$$
\\
  From one point of view,   (\ref{expansion1}) implies that
  \begin{equation}\label{point spectrum 1}
  \left\{\,\,\int_{1}^{\infty}\frac{1}{t^{n+1}}\,d\mu(t)\,,\,\, n=0,1,2,...\,\,\right\}\subset\sigma_{p}(\mathcal{H}_{\mu})\,,
  \end{equation}
 where  $\sigma_{p}(\mathcal{H}_{\mu})$  is the point spectrum of the Hausdorff operator.
 On the other hand,  it  justifies that  $\mathcal{H}_{\mu}$ is a diagonal operator with respect to the orthonormal basis
 $$e_{n}(z)=\sqrt{\frac{\alpha^{n}}{n!}}\,z^{n}\,, \quad n \in \mathbb N  $$

 of  $F^2_{\alpha}$.
 The terms of the  sequence
  $$
  \left\{\,\,\int_{1}^{\infty}\frac{1}{t^{n+1}}\,d\mu(t)\,,\,\, n=0,1,2,...\,\,\right\}
  $$
  are the diagonal entries of it's matrix. Being $\mathcal{H}_{\mu}$  compact and diagonal operator,   implies

\begin{align}
  \sigma_{p}(\mathcal{H}_{\mu})&\subset \overline{\left\{\,\,\int_{1}^{\infty}\frac{1}{t^{n+1}}\,d\mu(t)\,,\,\, n=0,1,2,...\,\,\right\}}\nonumber\\
  &\nonumber \\
 &=\left\{\,\,\int_{1}^{\infty}\frac{1}{t^{n+1}}\,d\mu(t),\,\, n=0,1,2,...\,\,\right\}\cup\left\{0\right\}. \label{point spectrum 2}
  \end{align}
  As a reference for the spectral theory of diagonal operators we propose \cite{Hal}\,.

 Combining  (\ref{point spectrum 1}) and (\ref{point spectrum 2})  with the fact that a diagonal  is a normal operator give us the right to employ one last argument according to which the  membership in the  Schatten Class $\mathcal{S}_{p}$  of a compact and normal operator  is equivalent to the p-summability of its eigenvalues. For example see \cite{GoKr},\cite{Zh1}\,.

  Now we are in position to state the main result of this section.
\begin{Th}\nonumber
Let $\mu$ be a postive Borel measure on $(0,\infty)$ such that
 (\ref{well definition1}),(\ref{well definition2}) hold.
 The Hausdorff operator $\mathcal{H}_{\mu}$  belongs to the Schatten class $\mathcal{S}_{p}(F^2_{\alpha})$,\,  $p\in(0,\infty),$  if and only if
$$
\sum_{n=0}^{\infty}\left(\int_{1}^{\infty}\frac{1}{t^{n+1}}\,d\mu(t)\right)^{p}<\infty.
$$
\end{Th}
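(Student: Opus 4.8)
The plan is to deduce the theorem directly from the standard characterization of Schatten-class membership for compact normal operators, since the structural work has essentially been carried out in the discussion preceding the statement. First I would dispose of both implications by reducing them to the compact case. In the forward direction there is nothing to reduce: every operator in $\mathcal{S}_p$ is in particular compact, so $\mathcal{H}_{\mu}$ is compact and, by Theorem \ref{comp F(2,a)}, $\mu_n := \int_1^{\infty} t^{-(n+1)}\, d\mu(t) \to 0$. For the converse, the hypothesis $\sum_n \mu_n^p < \infty$ (with $0 < p < \infty$) forces $\mu_n \to 0$, so Theorem \ref{comp F(2,a)} again guarantees that $\mathcal{H}_{\mu}$ is compact on $F^2_{\alpha}$. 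Thus in both directions we may assume compactness and argue through the singular values.

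Next I would identify the singular values explicitly. As recorded before the theorem, $\mathcal{H}_{\mu}$ is diagonal with respect to the orthonormal basis $e_n(z) = \sqrt{\alpha^n/n!}\, z^n$ of $F^2_{\alpha}$, with diagonal entries $\mu_n$; being diagonal it is normal. For a compact normal operator the spectral theorem gives $\mathcal{H}_{\mu} = \sum_n \mu_n \langle \cdot, e_n\rangle e_n$, and its singular values are the numbers $|\mu_n|$ listed in nonincreasing order. Since $\mu$ is a positive measure, each $\mu_n \geq 0$, so $|\mu_n| = \mu_n$ and the sequence of singular values is precisely a rearrangement of $\{\mu_n\}_{n\geq 0}$.

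Finally I would invoke the cited characterization (see \cite{GoKr}, \cite{Zh1}): a compact normal operator lies in $\mathcal{S}_p$ if and only if its eigenvalue sequence belongs to $\ell^p$. Applied to $\mathcal{H}_{\mu}$, whose eigenvalues are exactly the $\mu_n$, this yields $\mathcal{H}_{\mu} \in \mathcal{S}_p \iff \sum_n \mu_n^p < \infty$, which is the assertion. The argument is short because the genuine difficulty, namely exhibiting the operator as diagonalizable with explicitly known eigenvalues, has already been settled; the only points that demand care are the positivity of the $\mu_n$ (so that singular values coincide with the eigenvalues themselves, not merely their absolute values) and the fact that the quoted equivalence holds across the full range $0 < p < \infty$, including $0 < p < 1$, where $\mathcal{S}_p$ carries only a complete metric rather than a norm. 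I do not anticipate a serious obstacle in carrying this out.
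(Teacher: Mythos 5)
Your proposal is correct and follows essentially the same route as the paper: exhibit $\mathcal{H}_{\mu}$ as a diagonal (hence normal) operator with respect to the orthonormal basis $e_n(z)=\sqrt{\alpha^n/n!}\,z^n$, identify its eigenvalues as the moments $\mu_n$, and invoke the standard equivalence between Schatten $\mathcal{S}_p$ membership and $\ell^p$-summability of the eigenvalues of a compact normal operator. Your explicit reduction of both implications to the compact case (via the compactness theorem and the observation that $\sum_n\mu_n^p<\infty$ forces $\mu_n\to 0$) is a small but welcome point of care that the paper leaves implicit.
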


\Addresses

\end{document}